\documentclass[11pt]{article}
\usepackage{amsthm, amssymb, latexsym, amsmath, color}
\usepackage[a4paper, total={6.5in, 8in}]{geometry}
\usepackage[dvipsnames,svgnames,table]{xcolor}
\usepackage[colorlinks=true,linkcolor=RoyalBlue,urlcolor=RoyalBlue,citecolor=PineGreen]{hyperref}
\usepackage{tikz}
\usepackage{tikz-cd} 

\usepackage{comment, caption}
\usepackage{graphicx,subcaption}

\usepackage{pgf,tikz,pgfplots}
\pgfplotsset{compat=1.14}
\usepackage{pgf,tikz,pgfplots}
\usepackage{mathrsfs}
\usetikzlibrary{arrows}
\usepackage{mathrsfs}
\usetikzlibrary{arrows}
\usepackage{capt-of}
\usetikzlibrary{decorations.pathreplacing}
\usetikzlibrary{cd}
\usetikzlibrary{positioning}
\usetikzlibrary{calc}
\usepackage{algorithmic}
\usepgfplotslibrary{fillbetween}
\usetikzlibrary{intersections}
\usetikzlibrary{patterns}

\usepackage[nameinlink]{cleveref}

\title{The genus of configuration curves of planar linkages \\
	is generically odd}
\author{Josef Schicho, Ayush Kumar Tewari, Audie Warren}
\newcommand{\tbd}[1]{{{\color{blue}#1}}}

\newcommand{\R}{\mathbb{R}}
\newcommand{\C}{\mathbb{C}}
\newcommand{\Q}{\mathbb{Q}}

\newcommand{\XX}{\overline{X}}
\newcommand{\YY}{\overline{Y}}

\DeclareMathOperator{\trop}{Trop}

\newtheorem{prop}{Proposition}
\newtheorem{lemma}{Lemma}

\newtheorem{theorem}{Theorem}

\newtheorem{conjecture}{Conjecture}

\theoremstyle{case}

\theoremstyle{remark}

\begin{document}

 \maketitle

\begin{abstract}
A one-degree-of-freedom graph is a graph obtained from a minimally rigid graph in the plane and removing an edge. For such graph, the set of realisations with fixed edge length, modulo rotations and reflections, is an algebraic curve. The genus of a connected component for generic edge lengths is a number that depends only on the graph. We prove that this genus is always odd, unless it is zero. The proof is based on tropical geometry.
\end{abstract}

 \section{Introduction}

Let $G=(V,E)$ be a simple graph without self-loops. If we assign generic\footnote{In this paper, a \textit{generic point} in $\mathbb C^n$ is a point whose entries are algebraically independent over $\mathbb Q$.} complex numbers to the edges of $G$, thought of as edge lengths, we can consider the set of placements of the vertices $V$ into $\mathbb C^2$ such that the squared distance of two vertices connected by an edge is equal to the assigned number. Such a placement is called a \textit{realisation} of $G$ with respect to the chosen edge lengths. Two such realisations are considered equivalent up to rotation and translation. This set of realisations is the zero set of a system of algebraic equations, and is called the \textit{generic configuration space} of $G$. A formula for the number of irreducible components of this algebraic set is known \cite{Lubbes2025}. If the dimension of the set is one, i.e. we have a {\em configuration curve}, then we say that $G$ is a \textit{one degree of freedom} (1-dof) graph; the equality $|E|=2|V|-4$ is a necessary condition. 

In \cite{genusalg}, we use tropical geometry to compute the genus of the generic configuration curve of a 1-dof graph (all irreducible components of the curve have the same genus). For example, the \emph{Jansen} mechanism \cite{sidman25} which is a 1-dof graph and appears as a leg in the famous moving sculptures \emph{Strandbeests} \cite{strandbeest}, has genus five. Upon calculating the genus for various 1-dof graphs, we (and several other colleagues) made two observations:

\begin{itemize}
    \item The genus of a 1-dof graph always seems to be either zero, or a odd number.
    \item When the genus of the 1-dof graph is zero, the graph always seems to be two rigid graphs connected at a single vertex.
\end{itemize}
In this paper, we prove that these two observations do indeed hold in general. The proof is comparatively easy in the case where $G$ has no rigid subgraphs other than edges, or equivalently, that the number of irreducible components is one. If $C$ is the configuration curve, then the set of all configurations {\em upon additionally taking the quotient by reflection} is also a curve, call it $C'$, and the quotient gives a two-to-one mapping from $C$ to $C'$. The Riemann-Hurwitz formula together with an analysis of the branch points proves the desired result.

As soon as the graph has a larger rigid subgraph, for instance a triangle, the irreducible components come in pairs related by reflections of the triangle (or whatever other rigid subgraphs the graph may have). The above map is birational on every component and does not give any information about the genus.

Our proof proceeds by introducing special instances of realisations defined by the property that points corresponding to vertices of a rigid subgraph are always collinear. This leads to the following question: {\em How do you prove that two algebraic curves have the same genus?} In this paper, we will prove this in the following way: both curves have a smooth tropicalisation, and therefore it suffices to prove that the tropical genera are equal. In fact, under certain conditions for the parameters defining the curves, the tropicalisations will actually be \textit{equal}. With this, we prove the following result.

 \begin{theorem}\label{thm:main}
    Let $G$ be a one degree of freedom graph, and let $C$ be one component of the generic configuration curve of $G$. Then either $g(C)$ is odd, or $g(C) =0$, in which case $G$ is given by two minimally rigid subgraphs $G_1$ and $G_2$ which have one vertex in common.
\end{theorem}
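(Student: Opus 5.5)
We plan to reduce to the case where every rigid subgraph of $G$ is a single edge, then settle that case with Riemann--Hurwitz.

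\emph{Step 1 (no nontrivial rigid subgraphs).} Here the generic configuration curve $C$ is irreducible, by the component count of \cite{Lubbes2025}. Reflection $\sigma$ acts on $C$ as an involution, and we set $C' = C/\sigma$. Fixed points of $\sigma$ on the smooth projective model are of two kinds.
\begin{itemize}
\item Affine fixed points are realisations that are congruent to their mirror image. Modulo rotations and translations, this forces all vertices to be collinear. For generic lengths, a collinear realisation imposes one extra condition per edge beyond those cutting out $C$, so we expect such points to be absent, or to occur in a controlled number.
\item Fixed points also occur at infinity of the curve.
\end{itemize}
Let $r$ be the total number of fixed points. Riemann--Hurwitz gives $2g(C)-2 = 2(2g(C')-2) + r$, hence $g(C) = 2g(C') - 1 + r/2$. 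To conclude that $g(C)$ is odd we must show $r \equiv 0 \pmod 4$, or more generally pin down $r/2$ modulo $2$.

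The points at infinity should be read off from the tropicalisation computed in \cite{genusalg}. Its unbounded ends correspond to partitions of $V$ into two rigid-like clusters escaping to infinity. We expect $\sigma$ to act on these ends in a controlled way: each end should be either swapped with its conjugate end (coming from the two circular points at infinity, $x \pm iy$ exchanged by reflection) or fixed, with fixed ends contributing in pairs. This is exactly the point where the branch-point analysis is delicate. If it instead yields $r=0$, then $g(C) = 2g(C')-1$, which is odd whenever $g(C')\ge 1$. When $g(C')=0$, that formula would give $g(C)=-1$, which is impossible, so this subcase forces $r>0$; the ramification count there should produce $g(C)=0$ only when the graph splits at a cut vertex.

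\emph{Step 2 (reduction).} For each maximal rigid subgraph $H$ with more than two vertices, replace $H$ by a graph $H'$ with the same vertex set, for instance a star or path pattern of edges. The lengths on $H'$ are chosen to be special so that the vertices of $H'$ are forced to be collinear, as in the collinear instances mentioned in the introduction. The resulting graph $G'$ is still 1-dof, and its only rigid subgraphs are edges. The key claim is $g(C)=g(C_{G'})$. We would prove this tropically. Both curves have smooth tropicalisations, so genus equals tropical genus (the first Betti number of the tropical curve). Choose valuations of the lengths on $H$ that are generic, and valuations for the collinear model $H'$ that are compatible with them. The tropicalisation of a component of the configuration curve depends only on the valuations of the edge lengths, via the formulas from \cite{genusalg}, and is insensitive to which reflection class of $H$ one takes. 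We would then check that the two tropical curves coincide. We expect this matching of tropicalisations, including smoothness of the special collinear model, to be the main obstacle.

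\emph{Step 3 (genus zero).} Combining Steps 1 and 2, $g(C)$ is odd unless $g(C)=0$. In the genus-zero case we argue via tropical genus: $g=0$ means the tropical curve is a tree. We would show that if $G$ is 2-connected in the rigidity sense, meaning it is not two rigid pieces glued at a vertex, then the tropical curve contains a cycle, for instance from a loop of clusterings. Conversely, if $G = G_1 \cup G_2$ with $G_1 \cap G_2$ a single vertex, then $C$ is just the angle circle between the two rigid bodies, which is rational.
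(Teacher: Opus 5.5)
Your skeleton (specialise to collinear rigid pieces, compare genera tropically, apply Riemann--Hurwitz to the reflection) is the same as the paper's. However, the reduction in Step~2 is not viable. A minimally rigid $H$ on $k\ge 3$ vertices has $2k-3$ edges, while a star or path on its vertices has $k-1$. So $G'$ has fewer than $2|V|-4$ edges and is not 1-dof. Moreover, over $\C$ no choice of lengths forces a tree into a rigid collinear position, since fibre dimension can only jump up under specialisation. If you enforce collinearity with extra edges (degenerate triangles with $\sqrt{\ell_{ac}}=\sqrt{\ell_{ab}}+\sqrt{\ell_{bc}}$, say), you recreate nontrivial rigid subgraphs with non-generic lengths. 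Then Step~1, which relies on generic lengths and on irreducibility from \cite{Lubbes2025}, no longer applies.

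The missing idea is that you need no new graph, only a curve on which reflection acts. The paper keeps $G$ and takes the special fibre $F_1=S_G^{-1}(\underline{\tau},\underline{\tau},\underline{c})$ of the subgraph map, in which every maximal rigid subgraph is realised collinearly. Swapping $u_e\leftrightarrow v_e$ preserves its equations precisely because the $\underline{\lambda}$- and $\underline{\mu}$-parameters coincide. The comparison you flag as the main obstacle is then concrete. After elimination, $F_1$ and the generic fibre $F_2=S_G^{-1}(\underline{\tau},\underline{\delta},\underline{c})$ are both intersections $X\cap Y$ in $(\mathbb C\{\{t\}\}^*)^{r-1}$, where $r$ is the number of maximal rigid subgraphs (not your fixed-point count). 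They share the same $X$, and their $Y$'s differ only in generic coefficients of valuation zero, so they have the same tropicalisations $\XX,\YY$. Since $\YY=w-\XX$ with $w$ the generic valuation vector of $\underline{c}$, the intersection is transversal. Hence $\trop(F_1)=\XX\cap\YY=\trop(F_2)$, this curve is smooth, and the genera agree by \cite{genusalg}. You also omit that $F_1$, not being a generic fibre, must be shown irreducible before Riemann--Hurwitz can be applied; the paper deduces this from $\trop(F_1)$ being smooth and connected.

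Step~1 leaves the ramification count, which is what decides the parity, as an expectation, and your guess that fixed points occur at infinity is wrong. With $u_{s(G_1)}=1$, reflection acts on $F_1$ by $u_{s(G_i)}\mapsto c_i/(c_1u_{s(G_i)})$. So at a fixed point $P$ of the smooth model, $\mathrm{ord}_P(u_{s(G_i)})=-\mathrm{ord}_P(u_{s(G_i)})$, hence this order is $0$ for all $i$. Thus $P$ lies over the torus and is a completely collinear realisation.

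The paper rules these out by a dimension count. Realisations with every $G_i$ collinear form a variety $L_G$ of dimension $1+\sum_i(|V_i|-1)=|V|-1+r/2$. The completely collinear ones have dimension $|V|-1$. So their image under $S_G$ misses a generic point of $\prod_i L_{G_i}$, which has dimension $|V|-2+r/2$, unless $r=2$. Hence for $r>2$ the double cover is unramified and $g=2g(C')-1$ is odd.

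The same lemma replaces your Step~3, whose claim that a graph not split at a vertex yields a cycle in the tropical curve is unsupported. If $g(F_1)=0$, then $g(C')=0$, and Riemann--Hurwitz forces exactly two ramification points, so $r=2$. Two minimally rigid subgraphs with $|E_1|+|E_2|=2|V|-4$ then share exactly one vertex.
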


 \section{Preliminaries}

 \subsection{Standard definitions}

 Given a graph $G=(V,E)$, consider realisations of $G$ in $\mathbb C^2$ such that en edge $\{v_1,v_2\}$ of $G$ has its endpoints placed at $(0,0)$ and $(1,0)$. This is done to remove translations and rotations. The edge map takes as input positions for all the other vertices, and measures the squared edge lengths.
 \begin{gather*}
     f_G: \mathbb C^{2|V|-4} \rightarrow \mathbb C^{|E|-1} \\
     (x_v,y_v)_{v \in V \setminus \{v_1,v_2\}} \rightarrow ((x_{v} - x_{v'})^2 + (y_{v} - y_{v'})^2)_{\{v,v'\} \in E}    
 \end{gather*}
The graph $G$ is called \textit{dominant} if $f_G$ is a dominant morphism, that is, the image of $f_G$ is Zariski dense within $\mathbb C^{|E|-1}$. A graph is called \textit{rigid} if a generic fiber of $f_G$ is finite. The graph is called \textit{minimally rigid} if it is rigid and dominant.

Our object of study are graphs which are dominant and such that the generic fiber of $f_G$ is one dimensional. Such graphs are called 1-dof graphs, and in this case the generic fiber of $f_G$ is also called the generic configuration curve of $G$. We will not use the edge map in the way it is introduced above - this is because the map above always norms a certain edge to length one, which is a restriction we do not wish to make.

It is proved in \cite{Lubbes2025} that the generic fiber of $f_G$ may have many components - in general the number of components in the generic fiber is the product of the realisation numbers of all the maximal rigid subgraphs, and that all these components have the same genus. In \cite{Lubbes2025}, a map called the \textit{subgraph map} was used; in this paper we will use a slight variation of the subgraph map. To introduce it, we will first discuss the specific form that our realisation space will take. 

\subsection{The realisation space}\label{sec:realisationspace}
 For a connected, dominant graph $G=(V,E)$, the standard unpinned realisation space is simply $\mathbb C^{2|V|}$, that is, we place each vertex independently into $\mathbb C^2$. Our first restriction will be that we assume the first vertex of the graph is always pinned at the origin. Our second modification will be to ensure that the placement of vertices is sort of injective - that is, we consider the Zariski open subset 
$$U:= \left\{ \left(0,0,x_1,y_1,x_2,y_2,...,x_{|V|},y_{|V|}\right) \in \mathbb C^{2|V|} : \forall (j,k)\in E, \ x_j \pm i y_j\neq x_k \pm i y_k \right\},$$
 We apply an isomorphism to $U$, defined in the following way:
\begin{gather*}
    \phi: U \rightarrow (\mathbb C^*)^{2|E|} \\
    \left(0,0,x_2,y_2,...,x_{|V|},y_{|V|}\right) \mapsto \left((x_j-x_k)+i(y_j-y_k),(x_j-x_k)-i(y_j-y_k)\right)_{\substack{\{j,k\} \in E(G) \\ j<k}}.
\end{gather*}
We shall denote points in $(\mathbb C^*)^{2|E|}$ by $(u_1,v_1,...,u_{|E|},v_{|E|})$. 
The image of $U$ under $\phi$ is a linear subvariety $X \subseteq (\mathbb C^*)^{2|E|}$ given by the cycle equations
\begin{equation}\label{eqn:cycleequations}
    \forall \gamma \in \text{Cyc}(G), \ \sum_{e \in \gamma} u_e = 0 = \sum_{e \in \gamma} v_e.
\end{equation}
It is easy to see that these equations hold within the image of $U$. Using the fact that the number of independent cycles in a connected graph is given by $|E|-|V|+1$, we see that 
$$\dim(X) = 2|E|-2(|E|-|V|+1) = 2|V|-2 = \dim(U).$$
Furthermore, we can define an inverse morphism $\phi^{-1}:X \rightarrow U$. We show how to find a preimage within $U$ from a point in $X$; since $G$ is connected, there is some path $e_1,...,e_k$ from the first vertex $(0,0)$ to $(x_k,y_k)$. We then have
$$x_k = \frac{1}{2}\sum_{j=1}^k (u_{e_j} + v_{e_j}),\quad  y_k = \frac{1}{2i}\sum_{j=1}^k (u_{e_j} - v_{e_j}).$$
Therefore $\phi$ is an isomorphism. We are still not done; within $U$ we would have liked to identify two realisations which are rotations of each another as equivalent - this is easy to do within $X$. It is easy to check that a rotation of a realisation (around the origin) inside $U$ corresponds to a dilation
$$(u_e,v_e)_{e \in E} \mapsto \theta \cdot (u_e,v_e) := (\theta u_e, \theta^{-1} v_e)_{e\in E}, \text{ for some } \theta \in \mathbb C^*.$$
We define an equivalence relation $\sim$ on $(\mathbb C^*)^{2|E|}$, where two vectors $p_1 \sim p_2$ if there is a $\theta \in \mathbb C^*$ such that $p_2 = \theta \cdot p_1$. We can then define the \textit{realisation variety} of $G$, given by
$$R_G := \left\{ (u_1,u_2,...,u_{|E|},v_1,v_2,...,v_{|E|}) \in (\mathbb C^*)^{2|E|}/\sim \ : \text{Equations \eqref{eqn:cycleequations} hold} \right\}.$$
Note that this is well defined since validity of any of the linear equations \eqref{eqn:cycleequations} does not depend on the choince of representative from the equivalence class. 


The above definition of the $R_G$ is not economic in terms of variables. We will also use a description that uses a minimal subset
of variables. To this purpose, we choose an edge $e_0\in E$ and a spanning tree $T\subset E$ that contains $e_0$. If $e\notin T$, then the value
of $u_e$ can be uniquely recovered from the value of all $u_{e'}$ with $e'\in T$: if we add $e$ to the tree, then it will close a unique cycle,
and $u_e$ can be obtained from the corresponding cycle equations~(\ref{eqn:cycleequations}). And then we can choose a unique representive
such that $u_{e_0}=1$. The number of variables we have is then $2|T|-1=2|V|-3=\dim(R_G)$. In other words, we may identify $R_G$
with a dense open subset of $\C^{2|V|-3}$ defined by inequalities that come from the restrictions we imposed in the beginning of this subsection.

\subsection{The subgraph map}

We can now define the subgraph map. The motivation for this map is that it gives a useful factorisation of the edge map of a graph; in fact, the subgraph map can be used to `single out' one irreducible component of the edge map. The concept of the subgraph map is simple - it takes as an input a realisation of a connected dominant graph $G$, and outputs the induced realisations for the maximal minimally rigid subgraphs $G_1,...,G_r$ of $G$, where we define $r$ to be the number of maximal minimally rigid subgraphs of $G$. A maximal minimally rigid (mmr) subgraph of $G$ is a minimally rigid subgraph of $G$ which cannot be extended to any larger minimally rigid subgraph. 

Every edge of $G$ belongs to exactly one mmr subgraph (maybe consisting of the edge itself). For each mmr subgraph $G_i = (V_i,E_i)$ we select any edge, say $e_i \in E_i$. Further, for any $\rho \in R_G$, we use $\rho_{G_i}$ to denote the projection of $\rho$ to the entries with edges within $E_i$ - this is the induced realisation of $G_i$. We then define
\begin{gather*}
    S_G:R_G \rightarrow \prod_{i=1}^r R_{G_i}, \ 
    \rho 
  \mapsto (\rho_{G_i})_{i=1}^r.
\end{gather*}
 The edge map $f_G:R_G \rightarrow \mathbb C^{|E|}$ maps a class represented by $(u_1,...,u_{|E|},v_1,\dots,v_{|E|})$
to the vector of products  $(u_ev_e)_{e\in E}$ -- clearly, this does not depend on the choice of representative.
The edge map now factorises in the following way.
        \begin{equation}
        \label{diag:subgraphfactor}
        \begin{tikzcd}
        R_G \arrow[dr, "f_G"] \arrow[r, "S_G"]& \prod_{i=1}^r R_{G_i} \arrow[d,"\prod_i f_{G_i}"]\\
        & \mathbb C^{|E|}
        \end{tikzcd}
        \end{equation}
It was proven in \cite{Lubbes2025} that $S_G$ is dominant, and its generic fiber is geometrically irreducible. The dimension of the generic fiber is the number of degrees of freedom of the graph $G$, that is, $\dim(S_G^{-1}(\rho_1,...,\rho_r)) = 2|V|-|E|-3$ for generic $\rho_1,...,\rho_r$. This is true since the product $\prod_i f_{G_i}$ is a generically finite map, because each $G_i$ is a minimally rigid graph. By the factorisation above, we can work directly with the subgraph map instead of the edge map. From hereon, the graph $G$ is always assumed to be a one-degree-of-freedom graph. 

\section{Tropicalising the fibers of $S_G$}

In this section we will tropicalise certain fibers of $S_G$. In order to do this, we must consider these fibers as being defined over the field of Hahn series $\mathbb C\{\{t\}\}$, with solutions lying within the algebraic torus $(\mathbb C\{\{t\}\}^*)^{2|E|}$. A Hahn series is a series of the form
$$p(t) =\sum_{i=k}^{+\infty} c_i t^i,$$
where the sum is over a well -ordered subset of $\R$ which is bounded below and has no acumulation points, $k$ is the minimum of this set, and each $c_i$ is a complex number. The valuation of such a series is 
$$v(p(t)) = k.$$
For background on tropical geometry we recommend the book of Maclagan and Sturmfels \cite{maclagansturmfels}. From now on all parameters and variables are considered as nonzero Hahn series.

\subsection{Fibers of $S_G$}

Our method of proof is to compare the tropicalisations of two specific fibers of the subgraph map $S_G$. One of these fibers will act as a generic (albeit specifically chosen!) fiber, and the other will be a `special' fiber where all realisations $\rho_1,...,\rho_r$ of the maximal minimally rigid subgraphs $G_1,...,G_r$ are collinear.

Let us start be writing down the algebraic equations defining a fiber. It will be useful to use all coordinates of $R_G$, but only a minimal subset of coordinates for each of the $R_{G_i}$. To make this more precise, we introduce two simple functions. The first we call $n$, and is a map from $E$ to the set of maximal rigid subgraphs of $G$, which maps an edge to the rigid subgraph it is contained in.
$$n:E \rightarrow \{G_1,...,G_r\}, \quad n(e) = G_i \ \text{ such that }e \in G_i.$$

Secondly, we define a map $s:\{G_1,...,G_r\} \rightarrow E$ which selects a single edge from each subgraph. The map $s$ is a right inverse of $n$.
$$s:\{G_1,...,G_r\} \rightarrow E, \ \forall e \in E \  n(s(e)) = e.$$
Finally, we will choose a spanning tree $T_i$ for each $G_i$, such that the special edge $s(G_i)$ is contained in the tree $T_i$ for each $i$. 
We use $T$ to denote the graph-theoretic union of all these trees.

The image point is specified by a vector in 
$$(\underline{\lambda}, \underline{\mu}, \underline{c}) \in  \prod_{i=1}^r(\mathbb C\{\{t\}\}^*)^{|T_i|-1} \times  \prod_{i=1}^r(\mathbb C\{\{t\}\}^*)^{|T_i|-1} \times  (\mathbb C\{\{t\}\}^*)^r.$$ 
For each subgraph $G_i$, the image realisation is given by a representative such that the $u$-coordinate of the edge $s(G_i)$ is one. All
$v$-coordinates of edges in $s(G_i)$ are collected in $\underline{c}\in (\mathbb C\{\{t\}\}^*)^r$. And we just specify the values of variables
corresponding to edges in the tree -- see the last paragraph of Section~\ref{sec:realisationspace}.

The fibers of $S_G$ can now be written algebraically in the following way. We have the given parameters
The equations defining $S_G^{-1}(\underline{\lambda}, \underline{\mu}, \underline{c})$ come in four groups. 
\begin{gather}
    u_e = u_{s(G_i)} \lambda_e, \ \forall i =1,...,r, \ e \in E_i \label{lambdaeq} \\
    v_e  = v_{s(G_i)} \mu_e,  \ \forall i =1,...,r, \ e \in E_i \label{mueq} \\ 
    u_{s(G_i)}v_{s(G_i)} = c_i, \forall i=1,...,r \label{scaleeq} \\
    \sum_{e \in \gamma} u_e = 0 = \sum_{e \in \gamma} v_e, \ \forall \gamma \in \text{Cyc}(G),\label{cycleeq}
\end{gather}
where in these equations any appearance of $\lambda_e$ with $e \notin T_i$ is given by the cycle equations, and with $\mu_{s(G_i)} = \lambda_{s(G_i)} = 1$. The equations \eqref{lambdaeq} and \eqref{mueq} enforce that the induced realisations of each $G_i$ is similar (meaning up to translation, rotation, and dilation) to the realisations given by $(\underline{\lambda}, \underline{\mu})$. The equations $\eqref{scaleeq}$ enforce a scaling $c_i$ onto each rigid subgraph $G_i$. Finally, we need to satisfy the cycle equations \eqref{cycleeq}. 
Given this specific form of the fiber, we prove the following lemma concerning a certain special fiber of $S_G$.

\begin{lemma}\label{lem:collinearfiber}
    For generic choice of $(\underline{\lambda},\underline{c}) \in \prod_{i=1}^r(\mathbb C\{\{t\}\}^*)^{|T_i|-1} \times  (\mathbb C\{\{t\}\}^*)^r$, the fiber $S_G^{-1}(\underline{\lambda},\underline{\lambda},\underline{c})$ consists of realisations of $G$ such that each induced realisation on $G_i$ is collinear.
\end{lemma}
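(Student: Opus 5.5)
Take a point of the fiber $S_G^{-1}(\underline{\lambda},\underline{\lambda},\underline{c})$ and fix one mmr subgraph $G_i$. The claim is that all edge vectors of the induced realisation of $G_i$ are parallel. Since the vertices of $G_i$ are connected through edges of $G_i$, this makes every vertex lie on one line through a chosen vertex, i.e.\ the realisation is collinear.

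First express the edge vectors. By \eqref{lambdaeq} and \eqref{mueq} with $\mu_e=\lambda_e$, every $e\in E_i$ satisfies
\[
u_e=u_{s(G_i)}\lambda_e, \qquad v_e=v_{s(G_i)}\lambda_e .
\]
For $e\notin T_i$, the value $\lambda_e$ comes from the cycle equations of $G_i$, which are linear. The $u$- and $v$-systems then have identical coefficients, because $\lambda$ and $\mu$ coincide, so the same $\lambda_e$ serves both. Genericity of $\underline{\lambda}$ ensures these derived $\lambda_e$ are nonzero, so the point really lies in the torus.

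Next recover real coordinates. By the inverse of $\phi$, an edge with coordinates $(u_e,v_e)$ has displacement vector
\[
\Bigl(\tfrac12(u_e+v_e),\ \tfrac1{2i}(u_e-v_e)\Bigr)=\lambda_e\Bigl(\tfrac12(a+b),\ \tfrac1{2i}(a-b)\Bigr),
\]
where $a=u_{s(G_i)}$ and $b=v_{s(G_i)}$. Thus every edge vector of $G_i$ is a scalar multiple ($\lambda_e$) of a single fixed vector $w_i$. Because $ab=c_i\neq0$, the vector $w_i$ is nonzero, since its squared length is $ab=c_i$. Choose a vertex $p$ of $G_i$. Each other vertex is $p$ plus a sum of edge vectors along a path inside $G_i$ (a path in $T_i$), hence lies on the line $p+\C w_i$. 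This holds for all $i$ simultaneously, which proves the lemma.

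The main obstacle is definitional rather than computational. One must check that the parameter point $(\underline{\lambda},\underline{\lambda})$, read as a realisation of $G_i$ through the last paragraph of Section~\ref{sec:realisationspace}, is a legitimate (degenerate, collinear) point with nonzero coordinates. One must also check that the cycle-derived $\lambda_e$ agree for the $u$ and $v$ sides, which holds by the symmetry of \eqref{cycleeq}. Genericity of $(\underline{\lambda},\underline{c})$ is used only to avoid vanishing of derived coordinates and of $c_i$. The statement also allows the fiber to be empty. Its nonemptiness and dimension are not needed for this lemma.
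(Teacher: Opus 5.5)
Your proof is correct and rests on the same observation as the paper's: the fiber equations force $(u_e,v_e)=\lambda_e\,(u_{s(G_i)},v_{s(G_i)})$ on each $E_i$, so $v_e/u_e$ is constant on $E_i$, and that constancy is exactly collinearity of $G_i$. The paper goes in the opposite direction, starting from a line $y=\alpha x+\beta$ and matching $c_i=\frac{1+\alpha i}{1-\alpha i}$. You instead derive parallel edge vectors $\lambda_e w_i$ with $w_i\neq 0$ directly from the fiber equations, which is the implication the lemma actually needs and requires no inversion of $\alpha\mapsto\frac{1+\alpha i}{1-\alpha i}$.
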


\begin{proof}
    Consider the realisation variety $R_{G_i}$ for any $i$. In the original coordinate system, the realisation of $G_i$ being collinear means that the vertex vector $(x_i, y_i)_{i=1}^{|V|}$ is of the form $y_i = \alpha x_i + \beta$ for some fixed $\alpha,\beta \in \mathbb C$. Under the isomorphism $\phi$, the $u,v$ coordinates in $R_{G_i}$ for such a realisation are of the form $(u_e, \left(\frac{1 + \alpha i}{1 - \alpha i}\right) u_e)$. Note that the value $\beta$ cancels through the isomorphism. Now consider the fiber $S_G^{-1}(\underline{\lambda},\underline{\lambda},\underline{c})$. For each rigid subgraph $G_i$, an element of this fiber must induce a realisation of the form $(\lambda_e, c_i \lambda_e)_{e \in E_i}$ on $G_i$. We see that this means that $G_i$ is being realised on a line, by setting $c_i = \frac{1 + \alpha i}{1 - \alpha i}$ as above. This finishes the proof since each rigid subgraph is collinear.
\end{proof}

\subsection{Two specific fibers of $S_G$}

Recall that a vector in a complex vector space is generic iff its coordinates are algebraically independent over $\Q$.
Similarily, we say that a vector over the field of Hahn series has {\em generic valuation} if its entries are not zero and the vector of the valuation of its entries are linearly independent over $\Q$. This implies that the coordinates are algebraically independent over $\Q$ (even over $\C$).

The tropical curves in \cite{genusalg} are tropicalisations of fibers of the subgraph map whose image points have generic valuation. In this paper, we choose two fibers in another way. We fix a generic vector $\underline{\tau} \in \prod_{i=1}^r(\mathbb C^*)^{|T_i|-1}$ and consider its coordinates as Hahn series with valuation zero. We also fix a vector $\underline{c} = (c_1,...,c_{r})\in(\mathbb C\{\{t\}\}^*)^{r}$ with generic valuation. 
We then define the fibers
$$F_1 := S_G^{-1}(\underline{\tau},\underline{\tau},\underline{c}). $$
Then we fix another generic vector $\underline{\delta} \in \prod_{i=1}^r(\mathbb C^*)^{|T_i|-1}$ and consider its coordinates as Hahn series with valuation zero. And we define
$$F_2:= S_G^{-1}(\underline{\tau},\underline{\delta},\underline{c}).$$

Given these two fibers, we wish to prove the following result. Note that a tropical curve is called smooth if it at every point it is locally tropically isomorphic to a tropical line, see \cite{genusalg} for more details.

\begin{prop}\label{prop:equalandsmooth}
    The two tropical curves $\trop(F_1)$ and $\trop(F_2)$ are equal. Furthermore, this tropical curve is smooth.
\end{prop}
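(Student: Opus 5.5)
The plan is to eliminate the redundant coordinates and reduce both fibers to a common normal form: two linear spaces, one in the $u$-variables and one in the $v$-variables, coupled by binomial equations. Equations \eqref{lambdaeq} and \eqref{mueq} express every $u_e$ and $v_e$ through $a_i := u_{s(G_i)}$ and $b_i := v_{s(G_i)}$, with $e\in E_i$. Equation \eqref{scaleeq} then reads $a_ib_i=c_i$. Now take a cycle $\gamma$ of $G$. It crosses each $G_i$ it meets along a path between two vertices $p,q$ of $G_i$, and the sum of the $\lambda_e$ along that path equals the difference $w^\lambda_i(p,q)$ of the $\lambda$-positions of $p$ and $q$ in the realisation of $G_i$ encoded by $\underline\lambda$. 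Hence the cycle equations \eqref{cycleeq} become
\[
\sum_i a_i\, w^\lambda_i(\gamma)=0,\qquad \sum_i b_i\, w^\mu_i(\gamma)=0 ,
\]
for coefficients $w^\lambda_i(\gamma)$, $w^\mu_i(\gamma)$ built in this way. So the fiber is the intersection, in $(\mathbb C\{\{t\}\}^*)^{2r}$ modulo the torus action $(a,b)\mapsto(\theta a,\theta^{-1}b)$, of three sets: the linear space $L_\lambda$ in the $a$-torus, the linear space $L_\mu$ in the $b$-torus, and the binomial variety $\{a_ib_i=c_i\}$.

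The second step is to tropicalise each piece. For $F_1$ and $F_2$ all coefficients of $L_\tau$ and $L_\delta$ are complex numbers, i.e.\ of valuation zero. Therefore $\trop(L_\tau)$ and $\trop(L_\delta)$ are the Bergman fans of the matroids $M_\tau$ and $M_\delta$ represented by their coefficient matrices. The key claim is that for generic $\underline\tau$ and $\underline\delta$ these matroids coincide. Each is the generic matroid $M_G$ of the linear system above, and it depends only on the combinatorics of how $G$ is glued from $G_1,\dots,G_r$. This holds because the conditions defining the matroid are polynomial conditions in the entries of $\underline\tau$ and fail only on a proper Zariski closed set. Consequently both $F_1$ and $F_2$ should tropicalise to
\[
\Big\{(A,B)\in \mathrm{Berg}(M_G)\times \mathrm{Berg}(M_G) : A_i+B_i=\mathrm{val}(c_i)\ \forall i\Big\}\Big/\mathbb R(1,\dots,1,-1,\dots,-1),
\]
and this set visibly does not depend on whether the second linear space came from $\underline\tau$ or from $\underline\delta$.

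The main obstacle is justifying that tropicalisation commutes with the intersection, and proving smoothness. For the first point I would use that $\mathrm{val}(\underline c)$ is generic. The tropical hypersurfaces $A_i+B_i=\mathrm{val}(c_i)$ then cut $\mathrm{Berg}(M_G)\times\mathrm{Berg}(M_G)$ transversally, since a generic translate of a fan meets another fan properly and in the relative interiors of cones. By the transverse intersection theorem (Osserman--Payne, or Maclagan--Sturmfels, Theorem 3.4.12), the tropicalisation of the intersection equals the stable intersection, which here is the set-theoretic intersection. For smoothness, I would argue locally. At a point of the curve, the product of the two Bergman fans is locally a product of Bergman fans of the minors (the local fan of a matroid fan is again a product of matroid fans). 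Cutting this transversally by generic translates of the linear conditions gives a one-dimensional fan that is locally a tropical line. Here I would invoke the corresponding smoothness argument of \cite{genusalg}, where the same structure (Bergman fans coupled by $A_i+B_i=\mathrm{val}(c_i)$) arises for image points with generic valuation. Only the valuation of $\underline c$, not that of $\underline\lambda$, enters that argument in an essential way.
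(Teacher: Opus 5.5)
Your proof that $\trop(F_1)=\trop(F_2)$ is sound and is essentially the paper's argument in other coordinates. Substituting $B=w-A$ with $w=\mathrm{val}(\underline c)$ turns your set into $\Sigma\cap(w-\Sigma)$, where $\Sigma=\mathrm{Berg}(M_G)$. After normalising $a_1=1$ this is the paper's $\XX\cap\YY$ with $\YY=w-\XX$. Your transversality argument (genericity of $w$ against rational spans) is exactly Lemma~\ref{thm:transversal}. Your minor-by-minor argument that $M_\tau=M_\delta$ justifies $\trop(Y_{\text{lin}})=\trop(Y_{\text{gen}})$ more cleanly than the paper does.

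The gap is in smoothness. Transversality, together with the fact that stars of matroid fans are products of matroid fans, does not by itself make the cut locally a tropical line. For example, $\Lambda\times\R\subset\R^3$, with $\Lambda$ the standard tropical line in $\R^2$, meets the plane $\{x_2=2x_3\}$ transversally. Yet the resulting edge in direction $e_1$ has weight $[\mathbb Z^3:\mathbb Z e_1+\mathbb Z e_3+\mathbb Z(0,2,1)]=2$. In your situation the local picture at $(A,B)$ is $\mathrm{Star}_A(\Sigma)\cap\bigl(-\mathrm{Star}_B(\Sigma)\bigr)$. The second factor is the tropicalisation of a reciprocal linear space, not of a linear space in the same coordinates, so nothing you wrote shows the intersection is locally the tropicalisation of a linear space.

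The missing idea is the paper's point-reflection trick.
\begin{itemize}
\item $\XX$ and $\YY$ both have dimension $r/2$ in $\R^{r-1}$ and meet transversally in a curve. So at every intersection point $p$, the faces $f_X,f_Y$ containing $p$ in their relative interiors satisfy $\dim f_X+\dim f_Y\ge r-1$.
\item Hence one of them, say $f_Y$, is maximal. Near $p$, $\YY$ is then the affine space $\mathrm{aff}(f_Y)$, which is invariant under $x\mapsto 2p-x$.
\item So near $p$ you may replace $\YY$ by $2p-\YY=\XX+(2p-w)$. This is the tropicalisation of a torus translate $\theta X$ of the linear variety $X$, and transversality at $p$ is unaffected.
\item Near $p$ the curve therefore coincides with $\trop(X\cap\theta X)$, a one-dimensional tropical linear space, hence smooth.
\item If instead $f_X$ is maximal, the same argument gives the negative of such a picture, which is still smooth.
\end{itemize}
Deferring to \cite{genusalg} with the remark that only $\mathrm{val}(\underline c)$ matters does not supply this step. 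The mechanism you actually describe (local product structure plus a transversal cut) is not what makes the curve smooth.
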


\begin{figure}[h!]
\begin{minipage}{0.5\textwidth}
\centering
     \begin{tikzpicture}[scale =1.2]
        \draw[gray, thick] (-1,0) -- (1,0);
        \draw[gray, thick] (-1,0) -- (-2,-1);
        \draw[gray, thick] (-2,1) -- (-1,0);
        \draw[gray, thick] (1,0) -- (2,-1);
        \draw[gray, thick] (2,1) -- (2,-1);
        \draw[gray, thick] (-2,1) -- (2,1);
        \draw[gray, thick] (-2,-1) -- (2,-1);
        \draw[gray, thick] (2,1) -- (1,0);
        \filldraw[black] (-1,0) circle (2pt);
        \filldraw[black] (1,0) circle (2pt);
        \filldraw[black] (-2,-1) circle (2pt);
        \filldraw[black] (-2,1) circle (2pt);
        \filldraw[black] (2,-1) circle (2pt);
        \filldraw[black] (2,1) circle (2pt);
    \end{tikzpicture}
    
\end{minipage}
\begin{minipage}{0.5\textwidth}
\centering
    \includegraphics[width=0.8\linewidth]{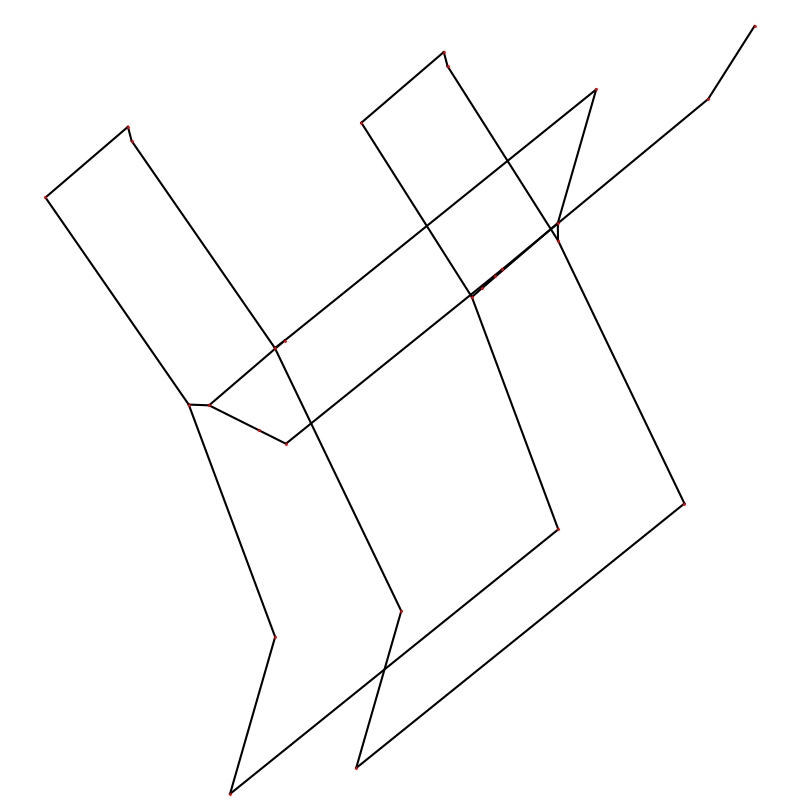}
    \label{fig:placeholder}
\end{minipage}

    \begin{minipage}{0.5\textwidth}
\centering
    \includegraphics[width=0.8\linewidth]{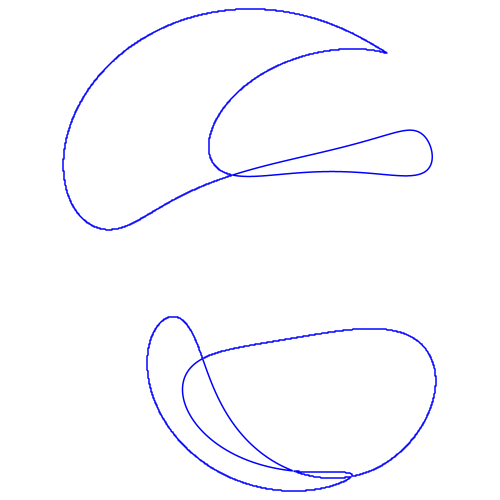}
    \label{fig:placeholder}
\end{minipage}
\begin{minipage}{0.5\textwidth}
\centering
    \includegraphics[width=0.8\linewidth]{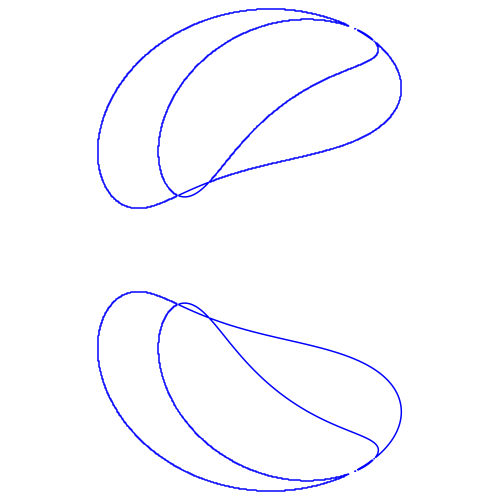}
    \label{fig:placeholder}
\end{minipage}
    \caption{Top left: A graph $G$ of genus five. Top right: The tropicalisation of its generic fiber. Bottom left: A projected image of the algebraic generic fiber $F_1$ (two real components). Bottom right: A projected image of the algebraic special fiber $F_2$ (two real components), with a visible symmetry. (The tropicalisation is also symmetric, but the symmetry is not visible in the projection.)}   
    \label{fig:placeholder}
\end{figure}

\subsection{The tropical varieties $\XX$ and $\YY$}

We follow a process from \cite{genusalg} to calculate the tropicalisations of $F_1$ and $F_2$. That is, we write these fibers as the intersection of two varieties. To do this, we return to the equations \eqref{lambdaeq}, \eqref{mueq}, \eqref{scaleeq}, and \eqref{cycleeq}. Using equations \eqref{lambdaeq} and \eqref{mueq}, we can calculate any variable $u_e$ (resp. $v_e$) by knowing just the value $u_{s(n(e))}$ (resp. $v_{s(n(e))}$). This allows us to eliminate down to just the variables $(u_{s(G_i)},v_{s(G_i)})_{i=1}^r$. Furthermore, we can use the equivalence relation in $R_G$ to pick representatives with $u_{s(G_1)}=1$. Finally, we can eliminate all of the $v_{s(G_i)}$ variables using equations \eqref{scaleeq}. What we are left with is a curve within $(\mathbb C\{\{t\}\}^*)^{r-1}$ which is given by the two sets of equations
\begin{gather}
    \sum_{e \in \gamma} u_{s(n(e))} \lambda_e = 0 \label{eqn:X}, \quad \gamma \in \text{Cyc}(G)\\
    \sum_{e\in \gamma} \frac{c_{n(e)}}{u_{s(n(e))}}\mu_e = 0, \quad \gamma \in \text{Cyc}(G). \label{eqn:Y}
\end{gather}
we have abused notation slightly here, using $c_{n(e)}$ to denote $c_i$ where $n(e) = G_i$. We further note that we have left $u_{s(G_1)}$ in these equations for simplicity, but as we have seen earlier, this variable is equal to $1$. Note that all these reductions we have made stay valid when applied to the specific fibers $F_1$ and $F_2$. We now make four definitions, referring to the fixed parameters $\underline{\tau}, \underline{\delta}$, and $\underline{c}$ from the definitions of $F_1$ and $F_2$.
\begin{gather*}X_{\text{lin}} := V\left( \sum_{e \in \gamma} u_{s(n(e))} \tau_e,\ \gamma \in \text{Cyc}(G)\right) \subseteq (\mathbb C\{\{t\}\}^*)^{r-1} \\
Y_{\text{lin}} := V\left( \sum_{e \in \gamma} \frac{c_{n(e)}}{u_{s(n(e))}}\tau_e, \ \gamma \in \text{Cyc}(G)\right) \subseteq (\mathbb C\{\{t\}\}^*)^{r-1} \\
X_{\text{gen}} := V\left( \sum_{e \in \gamma} u_{s(n(e))} \tau_e,  \ \gamma \in \text{Cyc}(G)\right) \subseteq (\mathbb C\{\{t\}\}^*)^{r-1}  \\
Y_{\text{gen}} := V\left( \sum_{e \in \gamma}  \frac{c_{n(e)}}{u_{s(n(e))}}\delta_e,\ \gamma \in \text{Cyc}(G)\right) \subseteq (\mathbb C\{\{t\}\}^*)^{r-1} 
\end{gather*}
We then have that $F_1 = X_{\text{lin}} \cap Y_{\text{lin}}$ and $F_2 = X_{\text{gen}} \cap Y_{\text{gen}}$. 
Note that $X_{\text{lin}}$ and $X_{\text{gen}}$ are equal - we distinguish them for exposition. 
Note also that $Y_{\text{lin}}$ and $Y_{\text{gen}}$ only differ by a different choice of generic complex parameters. The bijection from $Y_{\text{lin}}$ to $Y_{\text{gen}}$ induced by mapping $\tau_e$ to $\delta_e$ preserves the valuation, and therefore $Y_{\text{lin}}$ and $Y_{\text{gen}}$ have the same tropicalisation. 

We define
\[ \XX = \trop(X_{\text{lin}})=\trop(X_{\text{gen}}),\  
\YY = \trop(Y_{\text{lin}})=\trop(Y_{\text{gen}}) . \]
Both are smooth tropical varieties of dimension $\frac{r}{2}$ in $\R^{r-1}$.

The following lemma is an analogue of \cite[Lemma 2]{genusalg}.

\begin{lemma} \label{thm:transversal}
$\XX$ and $\YY$ intersect transversally at any of its intersection points.
\end{lemma}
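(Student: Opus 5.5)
The plan is to describe $\XX$ and $\YY$ explicitly and then reduce transversality to a rank statement about lattices. This follows the argument of \cite[Lemma 2]{genusalg}.

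\textbf{Describing $\XX$.} $X_{\text{lin}}$ is a linear space in the variables $u_i:=u_{s(G_i)}$ (with $u_1=1$), whose coefficients $\tau_e$ are generic constants of valuation zero. Hence $\XX$ is the Bergman fan of the matroid $M$ on $\{1,\dots,r\}$ given by the linear forms $\sum_{e\in\gamma}\tau_e u_{n(e)}$. These are the cycle equations of the graph $G/\!\!\sim$ obtained by collapsing each $G_i$ to a ``super-edge'' with generic weights. Concretely, $\XX$ is a union of cones $\mathrm{cone}(\mathbf 1_{F_1},\dots,\mathbf 1_{F_k})+\R\mathbf 1$ over chains of flats $F_1\subset\cdots\subset F_k$ of $M$, modulo the normalisation $u_1=1$. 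Genericity of $\tau$ gives the uniform realisable structure, so $\XX$ is smooth; this is stated just above.

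\textbf{Describing $\YY$.} $Y_{\text{lin}}$ is the image of the same kind of linear space, with the same matroid $M$, under the map $u_i\mapsto c_i/u_i$. So $\YY=\mathrm{val}(\underline c)-\XX'$, where $\XX'$ is the Bergman fan of $M$ again (with the $\delta_e$ or $\tau_e$ coefficients, which does not matter tropically). Thus $\YY$ is a translate of $-\XX$ by the vector $w=\mathrm{val}(\underline c)$, normalised by $w_1$.

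\textbf{Transversality.} Suppose $p\in\XX\cap\YY$. Then $p$ lies in the relative interior of a cone $\sigma$ of $\XX$, spanned over $\R$ by $\mathbf 1_{F_j}$ for a chain of flats, and $w-p$ lies in a cone $\sigma'$ spanned by $\mathbf 1_{F'_j}$ for another chain. So $w\in\sigma+\sigma'$. Since $w$ has $\Q$-linearly independent coordinates, $w$ cannot lie in any rational subspace of dimension less than $r$, counted modulo the lineality $\R\mathbf 1$. Therefore the linear span of $\sigma$ and $\sigma'$, which is rational, must be all of $\R^{r}/\R\mathbf 1\cong\R^{r-1}$. This is exactly the statement that the tangent spaces of the two smooth tropical varieties span the ambient space at $p$, i.e.\ transversality in the dimension count $\tfrac r2+\tfrac r2-1$. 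The same argument shows that $p$ lies in the relative interior of top-dimensional cones of both varieties. Otherwise a lower-dimensional face would already give a rational span of dimension less than $r-1$ containing $w$, which is again impossible.

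\textbf{Main obstacle.} The delicate point is the smoothness bookkeeping. Smooth tropical intersection requires that, at $p$, the lattices $\Lambda_\sigma$ and $\Lambda_{\sigma'}$ generated by the $0/1$ vectors together span $\Z^{r-1}$ with index one, not just span $\R^{r-1}$. The index-one condition is what makes $\trop(F_1)$ smooth in Proposition~\ref{prop:equalandsmooth}, though the lemma itself only needs the real span. For the lemma I would rely only on the real span argument above, which follows from the generic valuation of $\underline c$. I would defer the lattice index to the smoothness proof, where it should follow from unimodularity of the two chains of indicator vectors, as in \cite{genusalg}.
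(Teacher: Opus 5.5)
Your argument is correct and is essentially the paper's proof: since $\YY$ is the translate $w-\XX$ (with $w$ the normalised valuation vector of $\underline c$), a point $p\in\XX\cap\YY$ gives $w=p+(w-p)$ in the sum of the rational linear spans of two cones of $\XX$, and the $\Q$-linear independence of the valuations forces this sum to be $\R^{r-1}$. You should, however, drop the extra claim that $p$ lies in top-dimensional cones of both varieties: the genericity argument only gives $\dim\sigma+\dim\sigma'\ge r-1$, so one of the two cones may have codimension one, which is exactly what happens at the vertices of $\trop(F_1)$ and is the case treated in the proof of Proposition~\ref{prop:equalandsmooth}; drop also the remark that the matroid is uniform, which fails in general and is not needed, since every Bergman fan is smooth.
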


\begin{proof}
 Let $w=(v(c_1),\dots,v(c_r))\in (\C^\ast)^r$. Then $\YY=w-\XX$. 

 Let $p\in \XX\cap\YY$. Let $f_X$ be a face of $X$ and let $f_Y$ be a face of $\YY$, both containing $p$.
  Let $L_X$ be the affine span of $f_X$ and let $L_Y$ be the affine span of $f_Y$. 
  Then $L_X$ contains zero, because zero is in the closure of every face of $X$. The translate
  $L_Y-w$ also contains zero, by the same reason. So, both $L_\sigma$ and $L_\tau-w$ are
  vector spaces. Moreover, both vector spaces are defined over ${\mathbb Q}$. Let $W:=L_X+(L_Y-w)$.
  Since $p\in L_X$ and $-p\in L_Y$, it follows that $w\in W$. But $w$ is generic, and
  $W$ is defined over ${\mathbb Q}$. It follows that $W={\mathbb R}^{r-1}$, which means $\XX$ and
  $\YY$ intersect transversally.   
\end{proof}

As a consequence of transversality, it follows from \cite[3.4.12]{maclagansturmfels} that the tropicalization of the generic fibers are both equal to the intersection of the
two tropical varieties.

\begin{proof}[Proof of Proposition~\ref{prop:equalandsmooth}]
By \cite[Theorem~3.4.14]{maclagansturmfels}, intersection commutes with tropicalisation in the case the intersection of the tropical varieties is transversal.
Hence
\[ \trop(F_1)=\trop(X_{\text{lin}} \cap Y_{\text{lin}}) = \trop(X_{\text{lin}})\cap \trop(Y_{\text{lin}}) = \XX\cap\YY \]
\[ = \trop(X_{\text{gen}})\cap \trop(Y_{\text{gen}}) = \trop(X_{\text{gen}} \cap Y_{\text{gen}}) = \trop(F_2) , \]
which is the first claim of the proposition.

The second claim, namely the smoothness of the intersection, can be proven exactly as the second part of
\cite[Lemma 3]{genusalg}: each point $p$ of the intersection of a maximal face of $\XX$ and a codimension~1 face of $\YY$,
or conversely. So, in some neighborhood $U$ of $p$, we may replace $\XX$ or $\YY$ by its point reflection $2p-\XX$
or $2p-\YY$. Note that $2p-\YY$ is the tropicalisation of a linear variety. If $p$ is in a maximal face of $\YY$,
then inside $U$, the intersection coincides with the tropicalisation of the intersection of two linear varieties. 
So, it is the tropicalisation of a linear variety, and therefore smooth. The second case is similar, as there
is a symmetry of the roles of $\XX$ and $\YY$.
\end{proof}

\subsection{Irreducibility of $F_1$}

The last task of this section is to prove that the fiber $F_1$ is irreducible. 

\begin{lemma} \label{lem:F1isirreducible}
    The fiber $F_1$ is an irreducible curve.
\end{lemma}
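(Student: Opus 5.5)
We argue tropically, using Proposition~\ref{prop:equalandsmooth} together with the irreducibility of generic fibers of $S_G$ from \cite{Lubbes2025}. The idea is that $\trop(F_1)=\trop(F_2)$, that $F_2$ is a generic fiber and hence irreducible, so its tropicalisation is connected, and that smoothness of this common tropical curve leaves no room for $F_1$ to split into several components.

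\textbf{Step 1: $F_2$ is irreducible.} The parameters $(\underline{\tau},\underline{\delta},\underline{c})$ are algebraically independent over $\Q$. Indeed, $\underline{\tau},\underline{\delta}$ are generic complex vectors, and $\underline{c}$ has generic valuation, hence is algebraically independent over $\C$. So $F_2$ is the fiber over a generic point of $\prod_i R_{G_i}$. By \cite{Lubbes2025} the generic fiber of $S_G$ is geometrically irreducible, so $F_2$ is an irreducible curve. By the structure theorem of tropical geometry (connectedness of tropicalisations of irreducible varieties, \cite{maclagansturmfels}), $\trop(F_2)$ is a connected balanced one-dimensional complex. By Proposition~\ref{prop:equalandsmooth}, $\trop(F_1)=\trop(F_2)$, so $\trop(F_1)$ is connected and smooth. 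In particular every edge carries weight one, and every vertex is locally a tropical line.

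\textbf{Step 2: a decomposition of $F_1$ would split $\trop(F_1)$.} Suppose $F_1=A_1\cup\dots\cup A_k$ with the $A_j$ its irreducible components, all curves since $F_1$ is a fiber of the expected dimension. Then $\trop(F_1)=\bigcup_j \trop(A_j)$ as weighted complexes, and the weights add on common edges. We also need $F_1$ to be reduced, and this is a place requiring care. I would deduce it from the fact that the tropical multiplicities are all one: the multiplicity of $\trop(F_1)$ at a generic point of an edge is $1$ by transversality (Lemma~\ref{thm:transversal}) and the smoothness statement. A nonreduced component would contribute multiplicity at least $2$. Since all weights are $1$, each edge of $\trop(F_1)$ lies in exactly one $\trop(A_j)$.

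Now consider a vertex $p$. Locally $\trop(F_1)$ is a tropical line, so its rays have primitive directions $e_1,\dots,e_m,-(e_1+\dots+e_m)$ in suitable lattice coordinates. Each $\trop(A_j)$ is balanced at $p$. A balanced subset of these rays with weights one must be empty or all of them, because the only linear relation among these ray vectors is the full sum. Hence all edges at $p$ belong to the same $\trop(A_j)$. Since $\trop(F_1)$ is connected, all of its edges lie in a single $\trop(A_j)$, and every other $\trop(A_{j'})$ would be empty. That is impossible for a curve in the torus. Therefore $k=1$ and $F_1$ is irreducible.

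\textbf{Main obstacle.} The delicate point is the multiplicity/reducedness argument in Step~2: we must make sure that "smooth" in Proposition~\ref{prop:equalandsmooth} really includes weight one on every edge of $\trop(F_1)$ as the tropicalisation of the scheme $X_{\text{lin}}\cap Y_{\text{lin}}$. I would obtain this from \cite[Theorem~3.4.14]{maclagansturmfels}: for a transversal intersection, the multiplicities are products of the multiplicities of $\XX$ and $\YY$ times a lattice index. By the local reduction to intersections of linear spaces used in the proof of Proposition~\ref{prop:equalandsmooth}, this lattice index is $1$.
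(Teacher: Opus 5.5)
Your argument is correct and is essentially the paper's: irreducibility of the generic fiber $F_2$ makes $\trop(F_2)=\trop(F_1)$ connected. Smoothness (weight one everywhere, and no proper nonempty subset of the rays at a point being balanced) then prevents $\trop(F_1)$ from being split among several components. Your edge-labelling via additivity of weights is a slightly cleaner packaging of the paper's case analysis in Lemma~\ref{lem:connectedmeansirreducible}. It also has the small advantage of not needing the algebraic curve $F_1$ to be smooth, a hypothesis that lemma assumes but which the paper does not separately verify for $F_1$.
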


Note that we already know from \cite{Lubbes2025} that $F_2$ is irreducible, however this only applies to \textit{generic} fibers, and $F_1$ is not generic since it consists of realisations with each mmr subgraph $G_i$ collinear. Nevertheless we will show that $F_1$ is indeed irreducible, since we know its tropicalisation is smooth and connected. Smoothness we have shown above, and connectedness follows from $\trop(F_1) = \trop(F_2)$ and the fact that $\trop(F_2)$ is connected since $F_2$ is irreducible. We will therefore need the following result.

\begin{lemma}\label{lem:connectedmeansirreducible}
    Let $C$ be a smooth algebraic curve. If $\trop(C)$ is smooth and connected, then $C$ is irreducible.
\end{lemma}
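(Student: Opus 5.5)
The plan is to argue by contradiction. Suppose $C = C_1 \cup \dots \cup C_k$ with $k \ge 2$ irreducible components. Since $C$ is a smooth curve, its irreducible components are pairwise disjoint; they coincide with its connected components. Tropicalisation of a union is the union of the tropicalisations, so
\[ \trop(C) = \trop(C_1) \cup \dots \cup \trop(C_k). \]
Each $\trop(C_j)$ is a nonempty, connected, balanced one-dimensional polyhedral complex. Here we use that each $C_j$ is irreducible, so its tropicalisation is connected by the structure theorem in \cite{maclagansturmfels}. Also $\trop(C_j)$ carries multiplicities, and the multiplicities of $\trop(C)$ are the sums of those of the $\trop(C_j)$.

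The first key step uses multiplicities. A smooth tropical curve is locally a tropical line, so every edge of $\trop(C)$ has multiplicity one. Hence no edge of $\trop(C)$ can lie in two different $\trop(C_j)$, since otherwise its multiplicity would be at least $2$. So the $\trop(C_j)$ share no one-dimensional pieces.

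Because $\trop(C)$ is connected and $k\ge 2$, two of the pieces, say $\trop(C_1)$ and $\trop(C_2)$, must meet at some point $p$. By the previous step, near $p$ each piece contributes its own edges, and each piece is balanced at $p$. Take any point in a piece: either it lies in the interior of an edge, in which case the piece has at least two rays there, or it is a vertex, in which case balancing forces at least two rays. Either way each of $\trop(C_1)$ and $\trop(C_2)$ contributes at least two rays emanating from $p$, and these rays are distinct.

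The main obstacle is to show that the star of $\trop(C)$ at $p$ then cannot be a tropical line. The star of a smooth point is, up to tropical isomorphism, the tropical line in $\R^m$, i.e.\ the fan with rays $e_1,\dots,e_m, -\sum e_i$. Its primitive ray vectors sum to zero, and any $m$ of them form a lattice basis. So any proper nonempty subset of the rays is linearly independent and in particular has nonzero sum. Balancing of $\trop(C_1)$ at $p$, with multiplicities one, says that the ray vectors it contributes sum to zero. They form a proper subset, because $\trop(C_2)$ contributes other rays, and a nonempty one. This contradicts the linear independence just noted, so $k=1$ and $C$ is irreducible.

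One should also handle the degenerate case where $p$ lies in the relative interior of an edge of $\trop(C)$. That star is a line through $p$ with only two rays, so it cannot accommodate the at least four rays coming from $\trop(C_1)$ and $\trop(C_2)$; this gives the contradiction immediately.
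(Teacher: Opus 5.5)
Your proof is correct and uses the same two ingredients as the paper's. First, multiplicity one forbids $\trop(C_1)$ and $\trop(C_2)$ from sharing a segment, which is the paper's case 2. Second, no proper nonempty subset of the rays of a tropical line is balanced, which is the paper's small lemma and, as the paper notes, also covers cases 1 and 3.

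The only difference is that you skip the paper's preliminary transversal case. The paper handles it via \cite[Theorem~3.4.13]{maclagansturmfels} together with the disjointness of the components of the smooth curve $C$, but your uniform local argument at an intersection point already covers it. Consequently your proof never uses the smoothness of $C$ (you state the disjointness but do not use it), which is consistent with the paper's remark that this hypothesis is superfluous.
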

We remark that actually something strictly stronger is true; the assumption of smoothness of $C$ is actually already a consequence of the smoothness of $\trop(C)$ \cite[Section 5]{jell2020constructing}. However, we do not need this here.

\begin{proof}[Proof of \Cref{lem:connectedmeansirreducible}]
    We will use contradiction. Assume that $C$ is reducible. As $\trop(C)$ is connected, there exist two components $C_1,C_2$ of $C$ such that $\trop(C_1)$ and $\trop(C_2)$ intersect. If this intersection is transverse, then we can use \cite[Theorem~3.4.13]{maclagansturmfels} to conclude that 
    $$\trop(C_1) \cap \trop(C_2) = \trop(C_1 \cap C_2) = \emptyset,$$
    which follows since $C$ is smooth so its components cannot intersect. This is a contradiction since we assume that the tropicalisations \textit{do} intersect. Therefore the intersection must be non-transverse. This non-transverse intersection can occur in various ways.

    \begin{enumerate}
        \item Two 1-d cells intersect in a single point, call it $p$. If this occurs then the intersection point is not smooth as there are edges leaving $p$ in opposite directions, and so cannot be locally isomorphic to a tropical line at $p$.
        \item Two 1-d cells intersect in a 1-d segment. If this occurs then the 1-d segment has weight greater than one in the tropical curve, and so cannot be smooth.
        \item A 1-d cell intersects a 0-d cell. Just as in the first case, the 0-d cell has two edges leaving it in opposite directions, which cannot be smooth.
        \item Two 0-d cells intersect. In this case we will use the following small lemma.
    \end{enumerate}
\begin{lemma}
 Let $T$ be a smooth tropical curve, and let $v$ be a vertex (0-dimensional cell) of $T$. Then no strict subset of the 1-d cells leaving $v$ can satisfy the balancing condition.
\end{lemma}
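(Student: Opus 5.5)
Recall the definition: a tropical curve $T\subseteq\R^n$ is smooth at $v$ if some neighbourhood of $v$ is, up to an integral affine isomorphism (an element of $GL_n(\mathbb Z)$ plus a translation), a neighbourhood of the vertex of a tropical line. In a tropical line in $\R^m$ the vertex has $m+1$ outgoing edges. Their primitive directions are $e_1,\dots,e_m$ and $-(e_1+\dots+e_m)$, and all weights are one. So the plan is to move the whole question to this standard model and then do a linear-algebra check there.

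\textbf{Step 1: reduction to the standard model.} Apply the integral affine isomorphism that sends $v$ to the vertex of a standard tropical line. It is linear on directions, so it sends primitive integer vectors to primitive integer vectors and preserves linear relations. Weights are also preserved. Balancing of a subset $S$ of the edges at $v$ means $\sum_{E\in S} w_E d_E=0$, where $d_E$ are the primitive direction vectors and $w_E$ the weights. This condition is therefore invariant under the isomorphism. Hence we may assume the edges at $v$ have directions $d_0=-(e_1+\dots+e_m)$ and $d_j=e_j$ for $1\le j\le m$, each with weight $1$. We may also assume the ambient space is spanned by these vectors, since the local model lives in its own linear span.

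\textbf{Step 2: the linear-algebra check.} Let $S$ be a nonempty subset of $\{0,\dots,m\}$ with $\sum_{j\in S} d_j=0$; we show $S=\{0,\dots,m\}$.
\begin{itemize}
\item If $0\notin S$, then $\sum_{j\in S} e_j=0$. The vectors $e_j$ are linearly independent, so $S=\emptyset$, which is excluded.
\item If $0\in S$, then $\sum_{j\in S\setminus\{0\}} e_j = e_1+\dots+e_m$. Comparing coefficients in the basis $e_1,\dots,e_m$ forces $S\setminus\{0\}=\{1,\dots,m\}$.
\end{itemize}
So the only nonempty balanced subset is the full set of edges at $v$. The same argument handles a weighted version, where $S$ carries arbitrary positive integer coefficients instead of being a plain subset. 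This version is what is needed when edges of $\trop(C_1)$ and $\trop(C_2)$ at a common vertex are combined. In that case $\sum_j a_j d_j=0$ with all $a_j\ge0$ forces $a_1=\dots=a_m=a_0$, so the support is all or nothing.

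The main subtlety is Step 1. One has to check that the cited definition of smoothness allows $GL_n(\mathbb Z)$-type maps, which preserve primitivity, and not arbitrary real linear maps. With real linear maps the primitive directions and weights would not be well defined. Once the definition is pinned down this way, the rest is the coefficient comparison above.
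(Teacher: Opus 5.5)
Your proof is correct and follows the paper's argument. Both use the tropical isomorphism to move a neighbourhood of $v$ to the vertex of a standard tropical line, note that balancing is preserved, and observe that no proper nonempty subset of $e_1,\dots,e_{k-1},-\sum_i e_i$ balances; your coefficient comparison, including the version with nonnegative coefficients, just makes explicit the step the paper calls clear.
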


\begin{proof}
    Since $T$ is smooth, there must be a tropical isomorphism $M$ from a small open set $U_v$ containing $v$, to some tropical line. If we suppose that $v$ has valency $k$, then $M$ can be chosen to map $U_v$ so that $M(v)= \underline{0} \in \mathbb R^{k-1}$, and so that the cells leaving $v$ have direction vectors $\left(e_1,e_2,...,e_{k-1}, -\sum_{i=1}^
    {k-1} e_i\right)$, where each $e_i$ is the unit vector in coordinate direction $i$. The balancing condition is preserved by tropical isomorphisms, and it is clear that no strict subset of the direction vectors from $M(U_v)$ satisfies the balancing condition. This concludes the proof.
\end{proof}

In fact, the first and third cases above are also special cases of this lemma. For the fourth case, if the two 0-d cells intersect (and assuming the 1-d cells leaving this cell are distinct, since otherwise we have case two from above), the subset of 1-d cells from, say, $\trop(C_1)$ satisfy the balancing condition, and this contradicts the lemma above. Therefore the tropical curve is not smooth, finishing the proof of \Cref{lem:connectedmeansirreducible}. 
\end{proof}

Since $\trop(F_1)$ is smooth and connected, \Cref{lem:connectedmeansirreducible} implies that $F_1$ is irreducible, proving \Cref{lem:F1isirreducible}.

\section{Applying the Riemann-Hurwitz Theorem}

In this section we will apply the Riemann-Hurwitz theorem to prove \Cref{thm:main}. We recall that the two fibers we are interested in are $F_1$, which is a fiber of the subgraph map which gives each mmr subgraph of $G$ a collinear realisation, and $F_2$, which is a specifically chosen but algebraically fully generic. Since we know that the two tropical curves $\trop(F_1)$ and $\trop(F_2)$ are smooth, by \cite[Theorem 1]{genusalg} the (geometric) genus of the curves $F_1$, $F_2$, is equal to the genus of the tropical curves $\trop(F_1)$ and $\trop(F_2)$. The genus of a tropical curve is the number of bounded edges minus the number of vertices, plus one - that is, it is the genus of the underlying graph given by removing all infinite rays. However, we have already proved that these two tropical curves are \textit{equal}, so that $g(F_1) = g(F_2)$. We now aim to give a formula for $g(F_1)$.

Consider the morphism $\phi$ which maps each element $\rho$ of $F_1 \subseteq R_G$ to its equivalence class $[\rho]$ given by reflection. To be explicit, the reflection map is an automorphism of $R_G$ which swaps the $u$ and $v$ variables, that is,
$$(u_e,v_e)_{e \in E} \mapsto (v_e,u_e)_{e \in E}.$$
The `quotient morphism' is then given by
$$\phi: F_1 \rightarrow C, \quad \phi(\rho) = [\rho],$$
where $C$ is some one-dimensional variety - it will not be relevant exactly what this image curve is. The morphism $\phi$ is \tbd{surjective} (by definition) and is a $2:1$ map, since a generic class has two elements. Applying the Riemann-Hurwitz formula to this map yields
$$2g(F_1) - 2 = 2(2g(C)-2) + \#\{\text{Ramification points}\}.$$
What are the ramification points of this map? Since they must be points in $F_1$ with only a single pre-image, they correspond to \textit{completely collinear} realisations of $G$ in $F_1$. We will now prove the following lemma.

\begin{lemma}\label{lem:branchpoints}
  The map $\phi$ only has branch points when $G$ has only two rigid components, that is, when $r=2$.  
\end{lemma}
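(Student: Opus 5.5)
The plan is to determine the fixed points of the reflection involution on $F_1$ explicitly, using the parametrisation of $F_1$ inside $(\mathbb C\{\{t\}\}^*)^{r-1}$ given by equations \eqref{eqn:X} and \eqref{eqn:Y} with $\lambda=\mu=\underline{\tau}$. Then I would show, using the genericity of the valuations of $\underline{c}$, that such fixed points cannot satisfy the cycle equations once some cycle passes through two different mmr subgraphs.

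\textbf{Step 1 (equations for fixed points).} A point of $F_1$ has coordinates $u_e=u_{s(G_i)}\tau_e$ and $v_e=\frac{c_i}{u_{s(G_i)}}\tau_e$ for $e\in E_i$. It is fixed by the involution exactly when $(v_e,u_e)_e\sim(u_e,v_e)_e$, i.e. when there is $\theta$ with $v_e=\theta u_e$ for all $e$ (this is total collinearity). This condition reads $c_i/u_{s(G_i)}^2=\theta$ for every $i$. With the normalisation $u_{s(G_1)}=1$ we get $\theta=c_1$, hence
\[ u_{s(G_i)}=\varepsilon_i\sqrt{c_i/c_1},\qquad \varepsilon_i\in\{\pm1\}. \]
For such points the equations \eqref{eqn:Y} are just $\theta$ times the equations \eqref{eqn:X}. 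So a fixed point is precisely a sign vector $\varepsilon$ for which the $X$-equations hold. Grouping each cycle equation by subgraphs, it becomes
\[ \sum_{i=1}^r \varepsilon_i\sqrt{c_i}\,a_{\gamma,i}=0,\qquad a_{\gamma,i}:=\sum_{e\in\gamma\cap E_i}\tau_e\in\mathbb C. \]

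\textbf{Step 2 (valuation argument).} The valuations $v(c_i)/2$ of the $\sqrt{c_i}$ are $\Q$-linearly independent, hence pairwise distinct, while each coefficient $a_{\gamma,i}$ is either $0$ or of valuation $0$. A sum of terms with pairwise distinct valuations cannot vanish unless every term vanishes, since the minimum valuation would be attained only once. Therefore a fixed point exists iff $a_{\gamma,i}=0$ for every cycle $\gamma$ and every $i$, and in that case every sign vector works.

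\textbf{Step 3 (graph theory).} If $r=2$, then $G$ is two minimally rigid graphs glued at one vertex, since sharing two vertices would make the union rigid. Every cycle then lies inside one $G_i$, and there $a_{\gamma,i}=0$ is just a cycle equation of the realisation $\underline{\tau}$ of $G_i$. So fixed points exist.

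If $r\ge 3$, I would argue via the block decomposition of $G$. The degrees of freedom add over blocks and $G$ is dominant with one degree of freedom, so some block $B$ is not a single mmr subgraph, and $B$ contains edges of at least two $G_i$. Since $B$ is $2$-connected, there is a simple cycle $\gamma$ in $B$ using edges from two different subgraphs. A maximal run of $\gamma$ inside some $E_i$ is a path between two distinct vertices of $G_i$, and the sum of the $\tau_e$ along it is the displacement between these two vertices in the generic realisation $\underline{\tau}$ of $G_i$.

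The main obstacle is showing $a_{\gamma,i}\neq0$ in this situation. A simple cycle could a priori meet $G_i$ in several runs whose displacements cancel. To handle this, I would first try to choose $\gamma$ so that it meets some $G_i$ in a single run. For instance, take a shortest cycle through two components, or contract each $G_i$ and use an ear decomposition. Then $a_{\gamma,i}$ is a single nonzero displacement: the vertices of $G_i$ are realised at distinct points, because $\underline{\tau}$ is generic. If such a choice fails, the fallback is to use the genericity of $\underline{\tau}$ directly: a sum of displacements of several vertex pairs vanishes identically in a generic realisation only if the pairs cancel combinatorially, and a simple cycle cannot produce such a cancellation. Either way, for $r\ge3$ some $a_{\gamma,i}\neq0$, so no fixed points exist. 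This proves the lemma.
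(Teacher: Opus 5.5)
Your proof is correct, and it takes a genuinely different route from the paper.

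\emph{The paper's route.} The paper never locates fixed points. It sets $L_G=S_G^{-1}(\prod_i L_{G_i})$ and lets $CL_G\cong\mathbb C^{|V|-1}$ be the totally collinear locus. It then compares $\dim \prod_i L_{G_i}=\sum_i(|V_i|-1)=\frac{|E|+r}{2}$ with $\dim CL_G=\frac{|E|}{2}+1$, so $\operatorname{codim}_{L_G}CL_G=\frac r2$ while the fibres are curves. For $r\ge 3$ the image of $CL_G$ is therefore not dense, and the fibre over the generic point $(\underline\tau,\underline\tau,\underline c)$ misses it. That argument is short, needs no graph theory, and uses only algebraic independence of the parameters.

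\emph{Your route.} You use the generic \emph{valuation} of $\underline c$ to split every cycle equation into the separate conditions $a_{\gamma,i}=0$. This costs you a combinatorial lemma, but it gives you three things:
\begin{enumerate}
\item an exact description: either all sign vectors give fixed points or none does;
\item the converse for $r=2$: exactly two fixed points, matching Riemann--Hurwitz in genus zero;
\item explicit coordinates in which the involution reads $u_{s(G_i)}\mapsto c_i/(c_1u_{s(G_i)})$.
\end{enumerate}
The third point makes it immediate that no puncture $P$ of the smooth completion of $F_1$ is fixed, because the involution negates the nonzero order vector $(\operatorname{ord}_P u_{s(G_i)})_i$ of each puncture. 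Riemann--Hurwitz needs this, and both your write-up and the paper's proof pass over it.

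\emph{Tightening Step 3.} Your ``fallback'' is already a complete argument and should be the main one.
\begin{itemize}
\item Let a simple cycle $\gamma$ use edges both in and outside $E_i$. Its maximal runs in $G_i$ go from $a_k$ to $b_k$ ($k=1,\dots,m$). These $2m$ vertices are pairwise distinct, because $\gamma$ is simple and consecutive runs are separated by edges outside $E_i$.
\item Hence $a_{\gamma,i}=\pm\sum_k(z_{b_k}-z_{a_k})$ is a nonzero integer linear form in the vertex positions.
\item Rewritten along $T_i$, it is a nontrivial $\mathbb Z$-combination of $1=\tau_{s(G_i)}$ and the algebraically independent $\tau_e$, $e\in T_i\setminus\{s(G_i)\}$. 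So it is nonzero.
\end{itemize}

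Two smaller points. The ``shortest cycle'' idea is not justified as stated, since closing an outside segment by a path in $G_i$ can lengthen the cycle; you do not need it. Also, ``degrees of freedom add over blocks'' is only true if the global rotation is counted. The clean version is a count: if every block were a single $G_i$, then $|E|=\sum_i(2|V_i|-3)$ and $\sum_i(|V_i|-1)=|V|-1$ give $|E|=2|V|-2-r$. This equals $2|V|-4$ only for $r=2$.
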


\begin{proof}
    For each mmr subgraph $G_i$, define $L_{G_i}$ to be the variety of realisations of $G_i$ such that every vertex is on a common line. Let $L_G$ be the variety of realisations of $G$ such that each mmr subgraph is collinear - this is precisely the pre-image under the subgraph map $S_G$ of the variety
    $$\prod_{i=1}^r L_{G_i} \subseteq \prod_{i=1}^r R_{G_i},$$
    and since fibers of elements of $\prod_{i=1}^r L_{G_i}$ are generically one-dimensional (which follows from $\trop(F_1)$ being a smooth tropical curve), we have
    \begin{align*}
        \dim(L_{G}) &= 1 + \sum_{i=1}^r\dim(L_{G_i}) \\
        & = 1 + \sum_{i=1}^r (|V_i|-1) \\
        & = 1 + \sum_{i=1}^r (\frac{|E_i| + 3}{2}) \\
        & = \frac{|E| + r}{2} + 1.
    \end{align*}
Secondly, we define $CL_G$ to be the variety of realisations of $G$ with \textit{every point} on a common line. Clearly $CL_G \cong \mathbb C^{|V|-1}$, and so we have
$$\text{codim}_{L_G}(CL_G) = \frac{r}{2}.$$
Points of $CL_G$ correspond to ramification points of $\phi$. On the other hand, a generic fiber of
$$S_G : L_G \rightarrow \prod_{i=1}^r L_{G_i}$$
only intersects $CL_G$ if $\text{codim}_{L_G}(CL_G) = 1$. Therefore branch points only occur if  $r=2$.
\end{proof}
We can now finish the proof of \Cref{thm:main}. Indeed, firstly suppose that $g(F_1) = 0$. Since $\phi$ is a morphism, we must also have $g(C) =0$, and so Riemann-Hurwitz yields
$$\#\{\text{Ramification points}\} = 2,$$
and so since there are ramification points we have $r=2$ by \Cref{lem:branchpoints}, so $G$ is given by two rigid subgraphs connected at a single vertex. Now assume that $g(F_1) \neq 0$. We then have $r > 2$, since if $r=2$ the fiber is isomorphic to a circle and so has genus zero. But then there are no ramification points, and so by the Riemann-Hurwitz formula
$$g(F_1) = 2g(C)-1,$$
and so $g(F_1)$ is odd. This concludes the proof of \Cref{thm:main}.

\section{Further remarks}

In this paper we classified graphs with generic fiber having genus zero; the next natural step would be to classify those graphs whose fiber has genus one. From small experiments, it seems likely that those graphs $G$ with genus one are equivalent to a four-cycle, in the the sense that $G$ has exactly four maximal rigid graphs which must be attached in a cycle. Note that there is no other way to connect four maximal rigid subgraphs in a 1-dof graph.

\begin{conjecture}
Let $G$ be a 1-dof graph whose generic fiber has genus one. Then $G$ has four maximal rigid subgraphs, and is therefore `equivalent to' a four-cycle.
\end{conjecture}

Furthermore, in this paper we proved that the genus of a 1-dof graph must be odd (if non-zero), however it is not clear which odd numbers may appear. It is not even clear whether a graph with genus three exists. By running the algorithm from \cite{genusalg} on graphs with at most nine vertices, we have examples with the following genera: 
$$0,1,5,7,17,21,23,25,27,33,49,55,57,65,69,73,129,145,151,321.$$
We thank Jose Capco for running these experiments. An over-arching problem would be to classifiy exactly which odd numbers can appear, although this seems to be a very difficult problem and we have no conjecture as to the pattern.

\section*{Acknowledgments}

A.W. was partially supported by Austrian Science Fund (FWF): 10.55776/PAT2559123.

 \vspace{4mm}
\noindent Research Institute for Symbolic Computation (RISC)
\\Johannes Kepler University, Linz, Austria
\\\url{josef.schicho(at)risc.jku.at}
\\[4mm]
Johann Radon Institute for Computational and Applied Mathematics (RICAM)
\\Austrian Academy of Sciences, Linz, Austria
\\\url{audie.warren(at)oeaw.ac.at}
\\\url{ayushkumar.tewari(at)oeaw.ac.at}

\bibliography{oddgenus}
\bibliographystyle{plain}

\end{document}